\documentclass[12pt,reqno]{amsart}

\usepackage[utf8]{inputenc}
\usepackage[english]{babel}
\usepackage{amsmath,amssymb,amsfonts,amsbsy,amsthm,amscd,latexsym,graphicx}
\usepackage{empheq,textcomp}
\usepackage{framed}

\theoremstyle{definition}
\newtheorem{theorem}{Theorem} [section]

\newtheorem{lemma}[theorem]{Lemma}

\newtheorem{remark}[theorem]{Remark}

\numberwithin{equation}{section}

\newcommand{\Gc}{{\mathcal{G}}}

\newcommand{\N}{\mathbb{N}}
\newcommand{\R}{\mathbb{R}}

\newcommand{\Z}{\mathbb{Z}}

\newcommand{\Eq}{\, = \,}

\newcommand{\Le}{\, \le \,}

\newcommand{\bigabs}[1]{\bigl|#1\bigr|}

\newcommand{\ip}[2]{\langle#1,#2\rangle}

\newcommand{\norm}[1]{\|#1\|}

\newcommand{\bigparen}[1]{\bigl(#1\bigr)}

\newcommand{\set}[1]{\{#1\}}
\newcommand{\bigset}[1]{\bigl\{#1\bigr\}}

\newcommand{\inN}{_{n\in \mathbb{N}}}
\newcommand{\sumli}{\sum_{n=1}^\infty}

\begin{document}

\title{Gabor frames with no Gabor duals} 

\author{Pu-Ting Yu}
\maketitle

\begin{abstract}
Let $H$ be an infinite-dimensional Hilbert space and let $\set{x_n}\inN$ be a frame for $H$.
 We say that a sequence $\set{y_n}\inN$ is an \emph{alternative dual} of $\set{x_n}\inN$ if $$x=\sumli\ip{x}{y_n}x_n\quad \text{for all }x\in H,$$ 
with the convergence of the series in the norm of $H.$ We say that a sequence $\set{z_n}\inN$ is a \emph{pseudo-synthesis dual} of $\set{x_n}\inN$ if $$x=\sumli\ip{x}{x_n}z_n \quad \text{for all }x\in H,$$ 
with the convergence of the series in the norm of $H.$ We prove that, at every level of smoothness of the generator, there exist Gabor frames for $L^2(\R^d)$ with infinitely many alternative duals (and pseudo-synthesis duals) such that none of them is a Gabor system.\
\end{abstract}
\section{introduction}
Let $g\in L^2(\R^d)$ be a nonzero function and let $\Lambda=\set{(a_n,b_n)}\inN$ be a subset of $\R^{2d}.$ The \emph{Gabor system} associated with $\Lambda$ and $g$, denoted by $\Gc(g,\Lambda)$, is the sequence $\set{M_{b_n}T_{a_n}g}_{n\in \N}$ consisting of discrete set of time-frequency shifts of $g$. Here for fixed $x,\xi \in \R$, $T_x$ is the translation operator defined by 
    $$T_x\colon L^2(\R^d)\rightarrow L^2(\R^d),\quad (T_xg)(t)\coloneq g(t-x),$$  and $M_\xi$ is the modulation operator defined by 
$$M_{\xi}\colon L^2(\R^d)\rightarrow L^2(\R^d),\quad (M_{\xi}g)(t)\coloneq e^{2\pi i\xi t}g(t).$$
 We say that $\Gc(g,\Lambda)$ is a \emph{Gabor frame} if there exist some positive constants $A\leq B$, called \emph{frame bounds}, such that the \emph{frame inequality}
\begin{equation}
\label{frame_ineq}
A\,\norm{f}^2\Le \sum_{n\in\N}\bigabs{\ip{f}{M_{b_n}T_{a_n}g}}^2 \Le B\,\norm{f}^2,
\end{equation}
is satisfied for all $f\in L^2(\R^d)$. It is known that for every Gabor frame there exists at least one \emph{dual sequence} $\set{y_n}\inN\subseteq L^2(\R^d)$ such that 
\begin{equation}
\label{alternative_dual_recon}
    f\Eq \sum_{n\in\N}\ip{f}{y_n}M_{b_n}T_{a_n}g
\end{equation}
and 
\begin{equation}
\label{synthe_dual_recon}
    f\Eq \sum_{n\in\N}\ip{f}{M_{b_n}T_{a_n}g}y_n,
\end{equation}
for all $f\in L^2(\R^d)$ with the convergence of the series in the norm of $L^2(\R^d).$ However, if a Gabor frame has unbounded amount of redundant elements, then there exists at least one sequence $\set{y_n}\inN\subseteq H$ such that Equation (\ref{alternative_dual_recon}) holds for all $f\in L^2(\R^d)$, whereas Equation (\ref{synthe_dual_recon}) fails for at least some $f_0\in L^2(\R^d).$ Likewise, there exists at least one sequence $\set{z_n}\inN\subseteq H$ such that Equation (\ref{synthe_dual_recon}) holds for all $f\in L^2(\R^d)$, but Equation (\ref{alternative_dual_recon}) fails for at least some $f_0\in L^2(\R^d)$ (See \cite[Corollary 3.1]{HY23}).
Here we say that an element of a frame is \emph{redundant} if removing that element does not destroy the completeness of the frame. Consequently, the notions arising from Equation (\ref{alternative_dual_recon}) and (\ref{synthe_dual_recon}) do not coincide in general. Following the terminology in \cite{HY23}, we say that a sequence is an \emph{alternative dual} of $\Gc(g,\Lambda)$ is it satisfies Equation (\ref{alternative_dual_recon}) for all $f\in L^2(\R^d)$ and a \emph{pseudo-synthesis dual} of $\Gc(g,\Lambda)$ if it satisfies Equation (\ref{synthe_dual_recon}) for all $f\in L^2(\R^d)$.

The form and properties of an alternative dual (respectively pseudo-synthesis dual) has a significant impact on the convergence of Equation (\ref{alternative_dual_recon}) (respectively Equation (\ref{synthe_dual_recon})). In fact, one can even infer properties of a Gabor frame from properties of its alternative duals (or pseudo-synthesis duals). For references along this direction, we refer the reader to \cite{SB13},\cite{BB15},\cite{ST16} and \cite{HY23}. As a result, determining the conditions under which an alternative dual (or pseudo-synthesis dual) inherits the structure or properties of its associated Gabor frame has long been a central question. However, whenever a Gabor frame contains even a single redundant element, the collection of its associated dual sequences expands from a singleton to an infinite family (\cite[Theorem 6.3.7]{Chr16}), which makes this type of problem a nontrivial task. In \cite{HY23}, Heil and the author 
resolved several questions in this direction. 
For example, they showed that a Gabor frame possesses finitely many redundant elements if and only if every associated alternative dual (and pseudo-synthesis dual) is itself frame. They also established that every Gabor frame has the exact same number of redundant elements as each of its alternative duals (and pseudo-synthesis duals). Another closely related open problem is to determine under what conditions a dual sequence associated with a Gabor frame must itself be a Gabor system. Using perturbation techniques (\cite[Theorem 6.3.7]{Chr16}), it is not too hard to construct a Gabor frame that admits an alternative dual (and a pseudo-synthesis dual) that fails to be a Gabor system. This settles the case in which the Gabor frame has a unique dual sequence. It is therefore natural to ask whether there exists a Gabor frame with infinitely many alternative duals (and pseudo-synthesis duals) such that none of them is a Gabor system.  In this paper, we explicitly construct a Gabor frame, containing redundant elements, for which neither its alternative dual nor its pseudo-synthesis dual is a Gabor system.
We remark that in the case where $\Lambda=a\Z\times b\Z$ for some $a,b>0$, it is known that $\Gc(g,\Lambda)$ admits at least one alternative dual that is also a pseudo-synthesis dual and forms a Gabor frame.

\section{Main Result}
We begin with two lemmas. 
\begin{lemma}
\label{fundamental_lemmaI}
    Fix $a>0$ and fix any irrational real number $\alpha$. For each $n\in\Z$ let $$c_n=\int_0^a e^{2\pi ia^{-1}(\alpha-n)x}\,dx.$$
    Then the following statements hold.
    \begin{enumerate}
        \item [\textup{(a)}] There exists some constant $C$ such that $\frac{\text{Im}(c_n)}{\text{Re}(c_n)}=C$ for all $n\in\Z.$\smallskip
        
        \item [\textup{(b)}] $\lim_{n \rightarrow \infty} \frac{|c_n|^2}{\text{Re}(c_n)}=0.$
    \end{enumerate}
\end{lemma}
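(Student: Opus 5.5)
Compute $c_n$ explicitly. Set $\theta_n = 2\pi(\alpha-n)$; this is nonzero for every $n\in\Z$ because $\alpha$ is irrational. Then
\[
c_n=\int_0^a e^{i\theta_n x/a}\,dx=\frac{a}{i\theta_n}\bigl(e^{i\theta_n}-1\bigr).
\]
Since $n$ is an integer, $e^{i\theta_n}=e^{2\pi i\alpha}$ does not depend on $n$. Write $w=e^{2\pi i\alpha}-1$. Irrationality of $\alpha$ gives $w\neq 0$, and also $\text{Re}(w)=\cos 2\pi\alpha-1<0$ and $\text{Im}(w)=\sin 2\pi\alpha\neq 0$. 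The whole $n$-dependence then sits in the real scalar $a/\theta_n$:
\[
c_n=\frac{a}{\theta_n}\,(-i w).
\]

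For (a), note that $-iw=\sin 2\pi\alpha + i(1-\cos 2\pi\alpha)$. Hence $\text{Re}(c_n)=\frac{a}{\theta_n}\sin 2\pi\alpha$ and $\text{Im}(c_n)=\frac{a}{\theta_n}(1-\cos 2\pi\alpha)$. Both are nonzero, because $\sin 2\pi\alpha\neq 0$ and $\theta_n\neq 0$. Their ratio is
\[
C=\frac{1-\cos 2\pi\alpha}{\sin 2\pi\alpha}=\tan(\pi\alpha),
\]
which is independent of $n$. The only point to check is that $\text{Re}(c_n)\neq 0$, so the quotient is defined; this is exactly where irrationality is used, since it excludes $2\alpha\in\Z$.

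For (b), compute $|c_n|^2=\frac{a^2}{\theta_n^2}|w|^2$. Dividing by $\text{Re}(c_n)$ gives
\[
\frac{|c_n|^2}{\text{Re}(c_n)}=\frac{a\,|w|^2}{\theta_n\,\sin 2\pi\alpha}=\frac{a\,|w|^2}{2\pi(\alpha-n)\sin 2\pi\alpha}.
\]
This tends to $0$ as $n\to\infty$, since $|\alpha-n|\to\infty$ while the numerator and $\sin 2\pi\alpha$ are fixed.

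There is no real obstacle here. The one thing that could go wrong is the sign convention in the integral, and it only affects $C$ up to sign. The essential observation is that $e^{2\pi i(\alpha-n)}$ is independent of $n$, so that $c_n$ is a fixed complex number times a real factor decaying like $1/n$.
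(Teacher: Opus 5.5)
Your proof is correct and follows the paper's route: you compute $c_n$ in closed form, note that $e^{2\pi i(\alpha-n)}=e^{2\pi i\alpha}$, so $c_n$ is a fixed complex number times the real factor $a/(2\pi(\alpha-n))$, and read off $C=\tan(\pi\alpha)$. For (b) you compute the quotient explicitly as $a|w|^2/(2\pi(\alpha-n)\sin 2\pi\alpha)$, where the paper instead uses the identity $|c_n|^2/\text{Re}(c_n)=\text{Re}(c_n)+C\,\text{Im}(c_n)$ together with $c_n\to 0$; you also check explicitly that $\text{Re}(c_n)\neq 0$, which the paper leaves implicit.
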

\begin{proof}
    (a) For each $n\in\Z$ we compute 
    \begin{align}
        \begin{split}
            c_n\Eq\frac{a}{2\pi i(\alpha-n)}(e^{2\pi i\alpha}-1)\Eq \frac{ae^{i\pi\alpha}}{2\pi i(\alpha-n)}\bigparen{2i \sin(\alpha\pi)}\Eq \frac{ae^{i\pi\alpha}\sin(\alpha\pi)}{\pi (\alpha-n)}.
        \end{split}
    \end{align}
    Therefore, we have $\text{Re}(c_n)\Eq\frac{a\cos(\alpha\pi)\sin(\alpha\pi)}{\pi (\alpha-n)}$ and  $\text{Im}(c_n)\Eq\frac{a\sin^2(\alpha\pi)}{\pi (\alpha-n)}.$ It then follows that 
    $$\frac{\text{Im}(c_n)}{\text{Re}(c_n)}\Eq \tan(\alpha\pi)\quad \text{for all }n\in\Z.$$\medskip
 
 (b) Note that since $(c_n)_{n\in\Z}\in \ell^2(\Z)$, $c_n\rightarrow 0$ as $n\rightarrow \infty.$ It follows that
 $$\dfrac{|c_n|^2}{\text{Re}(c_n)}\Eq\dfrac{(\text{Re}(c_n))^2+(\text{Im}(c_n))^2}{\text{Re}(c_n)}\Eq \text{Re}(c_n)+\tan(\alpha\pi)\text{Im}(c_n)\rightarrow 0~\text{as }n\rightarrow\infty.$$
\end{proof}

\begin{lemma}
\label{fundamental_lemmaII}
    Fix $a>0.$ Let $\theta(x)$ be any real-valued function defined on $[0,a].$ For almost every $x\in [0,a]$ the sequence of functions $\bigset{\cos\bigparen{2\pi a^{-1}kx+\theta(x)}}_{k\in\N}$ does not converge to $0$ as $k\rightarrow \infty.$    
\end{lemma}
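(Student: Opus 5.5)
The plan is to work pointwise: fix $x\in[0,a]$ and treat $\theta(x)$ as a constant. Then no regularity or measurability of $\theta$ is needed. Write $\beta=2\pi a^{-1}x$ and $\varphi=\theta(x)$, so the sequence in question is $u_k=\cos(k\beta+\varphi)$, $k\in\N$. I will show that $u_k\not\to 0$ for every $x$ outside the finite set $E=\set{x\in[0,a]: 2x/a\in\Z}=\set{0,a/2,a}$. This is stronger than the almost-everywhere claim.

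The key step is a shift argument using the addition formula:
$$\cos\bigparen{(k+1)\beta+\varphi}\Eq \cos(k\beta+\varphi)\cos\beta-\sin(k\beta+\varphi)\sin\beta .$$
Suppose, for contradiction, that $u_k\to 0$. Then $u_{k+1}\to 0$ as well, and the identity gives $\sin(k\beta+\varphi)\sin\beta\to 0$. If $\sin\beta\neq 0$, this forces $\sin(k\beta+\varphi)\to 0$. Combined with $\cos(k\beta+\varphi)\to 0$, we get
$$1\Eq\sin^2(k\beta+\varphi)+\cos^2(k\beta+\varphi)\rightarrow 0,$$
which is a contradiction. Hence $u_k\not\to0$ whenever $\sin\beta\ne0$.

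The condition $\sin\beta=0$ means $2\pi x/a\in\pi\Z$, that is, $2x/a\in\Z$. So it holds only for $x\in E$, a set of measure zero. (On $E$ the sequence is $\pm\cos\varphi$ and may indeed be $0$ when $\cos\theta(x)=0$; this is why the statement is only almost everywhere.)

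I do not expect a real obstacle. The only point needing care is to notice that $\theta(x)$ is frozen once $x$ is fixed, so the lemma reduces to the elementary statement about $\cos(k\beta+\varphi)$. An alternative route is equidistribution of $k\beta/2\pi$ mod $1$ for $x/a$ irrational (Weyl), but the addition-formula argument is shorter and handles all $x\notin E$ at once.
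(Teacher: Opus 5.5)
Your argument is correct, and it takes a different route from the paper's.

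\textbf{The paper's route.} The paper restricts to $x\notin a\mathbb{Q}$ and invokes Kronecker's theorem to get density of $\set{k a^{-1}x \bmod 1}$. It then argues that convergence to $0$ would force a tail of the shifted orbit to cluster near $1/4$ and $3/4$, contradicting density. This tacitly uses that every tail of an irrational rotation orbit is still dense, which is true but not spelled out.

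\textbf{Your route.} You replace the number theory with the addition formula. The identity $\sin(k\beta+\varphi)\sin\beta=\cos(k\beta+\varphi)\cos\beta-\cos((k+1)\beta+\varphi)$ shows that, whenever $\sin\beta\neq0$, convergence of the cosines to $0$ would force the sines to $0$ as well.

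\textbf{What your approach buys.}
\begin{enumerate}
\item[(i)] It is fully elementary.
\item[(ii)] It shrinks the exceptional set from the countable set $a\mathbb{Q}\cap[0,a]$ to the three points $\set{0,a/2,a}$. In particular it also covers rational $x/a$ with $2x/a\notin\Z$, where the orbit is finite and the density argument as written does not apply.
\item[(iii)] It can be made quantitative. The estimate $\bigabs{\sin\beta}\sqrt{1-u_k^2}\le |u_k|+|u_{k+1}|$ gives $\max(|u_k|,|u_{k+1}|)^2\ge \sin^2\beta/(4+\sin^2\beta)$ for every $k$.
\end{enumerate}

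\textbf{What the paper's approach buys.} For irrational $x/a$ it gives a stronger qualitative conclusion: the values $\cos\bigparen{2\pi a^{-1}kx+\theta(x)}$ along every tail are dense in $[-1,1]$. However, only non-convergence to $0$ on a set of positive measure is used in the proof of Theorem \ref{main_thm}, so your version suffices there. Both arguments are pointwise in $x$, so, as you note, no measurability of $\theta$ is needed.
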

\begin{proof}
Let $E=[0,a]\setminus a\mathbb{Q}$. Note that $|E|=a$ and $a^{-1}x\in \mathbb{Q}^c$ for all $x\in E.$
    Fix $x\in E$. By Kronecker's approximation theorem (for example, see \cite{KN74}), $\set{ka^{-1}x~\text{mod }1 \,|\,k\in\N}$ is dense in $[0,1]$ for every $x\in E.$ Consequently, $\bigset{\bigparen{ka^{-1}x+\frac{\theta(x)}{2\pi}}~\text{mod }1 \,|\,k\in\N}$ is dense in $[0,1]$ for every $x\in E.$ If $\cos\bigparen{2\pi a^{-1}kx+\theta(x)}$ were to converge to $0$ as $k\rightarrow \infty,$ then for sufficiently large $N$, $\bigset{\bigparen{ka^{-1}x+\frac{\theta(x)}{2\pi}}~\text{mod }1 \,|\,k\geq N}$ would have to be concentrated on two small neighborhoods of $1/4$ and $3/4,$ which is a contradiction. 
\end{proof}

We are now ready to prove our main result. For simplicity, we present the construction in the one-dimensional setting. However, higher-dimensional analogue can be obtained by straightforward modifications. Recall that a Gabor frame is a \emph{Riesz basis} if there exists a unique associated alternative dual that is also a pseudo-synthesis dual such that both Equation (\ref{alternative_dual_recon}) and (\ref{synthe_dual_recon}) hold for all $f\in L^2(\R).$
\begin{theorem}
\label{main_thm}
    There exist some $g\in C^\infty(\R)\cap L^2(\R)$ and some countable subset $\Lambda\subset\R^2$ such that the following statements hold.
    \begin{enumerate} \setlength\itemsep{0.5em}
        \item [\textup{(a)}]$\Gc(g,\Lambda)$ is a Gabor frame for $L^2(\R)$,
         \item [\textup{(b)}]$\Gc(g,\Lambda)$ has infinitely many alternative duals and pseudo-synthesis duals,
        \item [\textup{(c)}] No alternative dual or pseudo-synthesis dual of $\Gc(g,\Lambda)$ is a Gabor system.
    \end{enumerate}
\end{theorem}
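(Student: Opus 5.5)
The plan is to build $\Gc(g,\Lambda)$ as a \emph{Riesz basis plus one extra time--frequency shift}. Then $\Gc(g,\Lambda)$ has exactly one redundant element, so by \cite[Theorem 6.3.7]{Chr16} and \cite{HY23} its duals form an infinite family that can be written down explicitly. The modulation parameter $\alpha/a$ of the extra element is where Lemma \ref{fundamental_lemmaI} enters, and the oscillation in Lemma \ref{fundamental_lemmaII} is what should rule out every dual being a Gabor system.

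\textbf{Construction, parts (a) and (b).} Fix $a>0$ and an irrational $\alpha$. Choose a generator $g$ and an index set $\Lambda_0$ with the following two properties.
\begin{enumerate}
\item[(i)] $\set{e_n}=\Gc(g,\Lambda_0)$ is a Riesz basis of $L^2(\R)$.
\item[(ii)] On each block $T_{ja}[0,a]$, the elements $M_{n/a}T_{ja}g$ act like the weighted exponentials $e^{2\pi i a^{-1}nx}$ on $[0,a]$.
\end{enumerate}
A lattice with a smooth $g$ cannot achieve (i) because of the Balian--Low theorem. So $\Lambda_0$ will in general not be a lattice, and I would adapt the block structure so that $g\in C^\infty$. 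This is the step I would check first, since the whole construction rests on it.

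Set $\Lambda=\Lambda_0\cup\set{(0,\alpha/a)}$, so the added element is $x_0=M_{\alpha/a}g$. Expand it in the basis as $x_0=\sum_n \ip{x_0}{\tilde e_n}e_n$, where $\set{\tilde e_n}$ is the biorthogonal system. Up to the fixed weight, the coefficients are the $c_n$ of Lemma \ref{fundamental_lemmaI}. Adding one element to a Riesz basis gives a frame, which proves (a). For every $h\in L^2(\R)$, define
$$y_n \Eq \tilde e_n-\overline{\ip{x_0}{\tilde e_n}}\,h \quad (n\ne 0),\qquad y_0 \Eq h.$$
Each such sequence is an alternative dual. I would verify, using the single-redundancy description in \cite{HY23}, that every alternative dual has this form. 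The pseudo-synthesis duals are handled the same way with the roles of analysis and synthesis swapped. Varying $h$ gives infinitely many duals, which proves (b).

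\textbf{Part (c).} Suppose some dual $\set{y_n}$ were a Gabor system $\Gc(\varphi,\Lambda')$. Then $\norm{y_n}$ would be constant, and $|y_n|$ would be a translate of the single function $|\varphi|$. I would expand
$$\norm{y_n}^2 \Eq \norm{\tilde e_n}^2-2\,\text{Re}\bigparen{c_n\ip{h}{\tilde e_n}}+|c_n|^2\norm{h}^2 .$$
Lemma \ref{fundamental_lemmaI}(a) gives a fixed phase $\arg c_n$, and Lemma \ref{fundamental_lemmaI}(b) shows the quadratic term is negligible compared with the linear term. Together these force $\text{Re}(e^{i\pi\alpha}\ip{h}{\tilde e_n})$ to have a fixed sign and size along $n$ within a block. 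Writing $\ip{h}{\tilde e_n}$ as a Fourier coefficient, this turns into a pointwise condition on $|y_n(x)|^2$ of the form $\cos(2\pi a^{-1}kx+\theta(x))\to 0$ as $k\to\infty$ on a set of positive measure. Lemma \ref{fundamental_lemmaII} says this fails almost everywhere, which gives the contradiction.

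\textbf{Main obstacle.} The delicate part is reconciling two requirements: a smooth generator, and an exact Riesz-basis block structure that lets the $c_n$ appear verbatim. I would first try $g=w\cdot\chi$ built from overlapping smooth bumps on a non-lattice $\Lambda_0$, and fall back on a weighted analysis if the coefficients only agree with the $c_n$ up to a harmless factor.
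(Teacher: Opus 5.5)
Your proposal has two genuine gaps. Your parametrization of the duals in (b) through the biorthogonal system is fine: it only needs uniqueness of Riesz-basis coefficients, and it is equivalent to the paper's block-projection argument.

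\textbf{The smooth generator.} The obstacle you name does not exist, and seeing past it is the missing idea. The Balian--Low theorem forbids a lattice Gabor Riesz basis whose window satisfies $tg\in L^2$ and $\xi\widehat g\in L^2$ simultaneously. It says nothing against smoothness; for example, $\set{M_mT_n\,\mathrm{sinc}}_{m,n\in\Z}$ is an orthonormal basis with an entire window.

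The paper does all the block analysis with a \emph{discontinuous} window $w$ (its $h$), supported on $[0,a]$ with $|w|$ bounded above and below, for the system $\set{M_{k/a}T_{an}w}_{n,k\in\Z}\cup\set{M_{\alpha/a}w}$. It then takes $g=\widehat w$, which is entire by Paley--Wiener. The Fourier transform $\mathcal F$ is unitary and satisfies $\mathcal F(M_bT_x\phi)=T_bM_{-x}\mathcal F\phi$. So it maps frames, alternative duals and pseudo-synthesis duals to the same objects, and Gabor systems to Gabor systems up to unimodular constants. Those constants are harmless, since (c) only uses moduli; the added point simply moves to the time axis.

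Your fallback cannot work. Property (ii) forces the window to vanish off $[0,a]$ and to have modulus bounded below on it, which no continuous function does. Independently, the non-uniform Balian--Low theorem says no window in Feichtinger's algebra (which contains every $C_c^\infty$ bump) generates a Gabor Riesz basis for any index set.

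\textbf{Part (c).} Equality of all the norms $\norm{y_n}$ cannot give a contradiction, because it is attainable. Work in the $w$-picture with $w=a^{-1/2}\chi_{[0,a]}$, so $\tilde e_n=e_n$. Let $c_n=\ip{x_0}{\tilde e_n}$ and take your free element $h=t\,x_0$ with $t=e^{i\pi/3}$. Then:
\begin{itemize}
\item $y_0=h$ has norm $1$;
\item $y_m=e_m$ for every $e_m$ supported off $[0,a]$;
\item for $e_n$ supported on $[0,a]$, $\norm{y_n}^2=1+|c_n|^2(|t|^2-2\,\text{Re}\,t)=1$.
\end{itemize}
Yet $|y_n|=a^{-1/2}\bigabs{e^{2\pi inx/a}-\overline{c_n}\,t\,e^{2\pi i\alpha x/a}}$ is not constant on $[0,a]$, since $c_n\neq 0$ and $\alpha\notin\Z$. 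All other $|y_m|$ are translates of $a^{-1/2}\chi_{[0,a]}$, so this dual is not a Gabor system.

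In general, write $c_n=\text{Re}(c_n)(1+iC)$. The norm condition then says only that $\text{Re}((1+iC)\ip{\tilde e_n}{h})=\frac{|c_n|^2}{2\text{Re}(c_n)}\norm{h}^2$. Both sides tend to $0$ regardless: the left by Bessel's inequality, the right by Lemma~\ref{fundamental_lemmaI}(b). Your passage from this identity to a pointwise condition is a non sequitur, and Lemma~\ref{fundamental_lemmaII} has nothing to act on.

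You must use the moduli pointwise, as the paper does:
\begin{enumerate}
\item Off $[0,a]$ the duals are forced, $y_m=\tilde e_m$. So $|\varphi|$ is a translate of $|\tilde w|\chi_{[0,a]}$ with $\tilde w=1/(a\overline w)$, whose essential support is an interval of length exactly $a$.
\item $|h|$ is such a translate and every $c_n\neq0$. Hence $y_n=\tilde e_n-\overline{c_n}h$ would have essential support of diameter $>a$ unless $h$ lives on $[0,a]$. It follows that $|y_n|=|\tilde e_n|$ a.e.
\item Squaring gives $2\,\text{Re}(c_n\tilde e_n\overline{h})=|c_n|^2|h|^2$ pointwise, where $|\tilde e_n\overline h|=|\tilde w||h|$ does not depend on $n$.
\end{enumerate}
Only at this point do Lemma~\ref{fundamental_lemmaI}(a),(b) and Lemma~\ref{fundamental_lemmaII} produce the contradiction.
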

\begin{proof}
Let $a>0$ and let $h\in L^2(\R)$ be supported on $[0,a]$, satisfying $C\leq|h|^2\leq D$ almost everywhere on $[0,a]$ for some positive constants $C,D.$ By Painless Nonorthogonal Expansions (\cite[Theorem 1]{DGM86}, see also \cite[Theorem 11.4]{Hei11}), $\set{M_{a^{-1}k}T_{an}h}_{n,k\in\Z}$ is a Gabor frame for $L^2(\R)$ with frame bounds $aC$ and $aD.$ Moreover, $\set{M_{a^{-1}k}T_{an}h}_{n,k\in\Z}$ is a Riesz basis with the unique alternative dual, which is also a pseudo-synthesis dual, $\set{M_{a^{-1}k}T_{an}h^{-1}}_{n,k\in\Z}.$
Now let $g=\widehat{h}$ be the Fourier transform of $h$. By taking the Fourier transform, we see that $\Gc(g,a^{-1}\Z\times a\Z)$ is a Gabor frame for $L^2(\R)$. Let $\alpha\in a^{-1}\mathbb{Q}^c$ be an arbitrary irrational number and let $\Lambda = (a^{-1}\Z\cup\set{\alpha})\times a\Z $. Clearly, $\Gc(g,\Lambda)$ remains a Gabor frame. Moreover, $\Gc(g,\Lambda)$ satisfies statement (b) by \cite[Theorem 6.3.7]{Chr16}.  We will show that every alternative dual and every pseudo-synthesis dual of $\Gc(g,\Lambda)$ fails to be a Gabor system. Let $\Lambda'=a\Z\times (a^{-1}\Z\cup\set{\alpha}).$ It is then equivalent to show that no alternative dual or pseudo-synthesis dual of $\Gc(h,\Lambda')$ can be written in the form $\set{T_\beta M_{\gamma}\phi}_{(\beta,\gamma)\in \Gamma}$ for any $\phi\in L^2(\R)$ and any subset $\Gamma\subseteq \R^2$.\medskip

\emph{Case 1 Alternative Duals}$\colon$ 
Let $\set{h_{n,k}}_{n,k\in\Z}\cup \set{h_{\alpha}}\subseteq L^2(\R)$ be an alternative dual of $\Gc(h,\Lambda')$. That is, 
\begin{equation}
\label{alternative_dual_recon2}
    f\Eq \sum_{n,k\in\Z}\ip{f}{h_{n,k}}M_{a^{-1}k}T_{an}h + \ip{f}{h_{\alpha}}M_\alpha h \quad \text{for all }f\in L^2(\R),
\end{equation}
with the convergence of the series in the norm of $L^2(\R).$ Fix $n\in\Z\setminus\set{0}$ and let $f\in L^2(\R)$ be any function that is supported on $[na,(n+1)a]$. Then we have 
\begin{equation}
    f\Eq P_n(f) =P_n\bigparen{\sum_{n,k\in\Z}\ip{f}{h_{n,k}}M_{a^{-1}k}T_{an}h + \ip{f}{h_{\alpha}}M_\alpha h}= \sum_{k\in\Z}\ip{f}{h_{n,k}}M_{a^{-1}k}T_{an}h,
\end{equation}
where $P_n$ denotes the orthogonal projection onto $L^2([na,(n+1)a].$
By the uniqueness of the alternative dual of $\set{M_{a^{-1}k}T_{an}h}_{n,k\in\Z}$, it follows that 
\begin{equation}
\label{alternative_dual_form1}
    h_{n,k}\Eq M_{a^{-1}k}T_{an}h \Eq T_{an}M_{a^{-1}k}h^{-1} \quad \text{a.e. for any } (n,k)\in (\Z\setminus\set{0})\times \Z.
\end{equation}
For the case $n=0$, since $\alpha\notin a^{-1}\Z$, there exists an infinite sequence of nonzero scalars $(c_k)_{k\in\Z}\in \ell^2(\Z)$ such that $M_{\alpha}h=\sum_{k\in\Z}c_kM_{a^{-1}k}h$. Then for $f\in L^2(\R)$ that is supported on $[0,a]$ we have 
\begin{align}
\label{alternative_dual_form2}
\begin{split}
    f\Eq \sum_{k\in\Z}\ip{f}{h_{0,k}}M_{a^{-1}k}h + \ip{f}{h_{\alpha}}M_\alpha h&\Eq \sum_{k\in\Z}\ip{f}{h_{0,k}}M_{a^{-1}k}h+\sum_{k\in\Z}\ip{f}{h_\alpha}c_kM_{a^{-1}k}h\\
    &\Eq \sum_{k\in \Z} \ip{f}{h_{0,k}+\overline{c_k}h_\alpha}M_{a^{-1}k}h.
    \end{split}
\end{align}
Likewise, we have $h_{0,k}+ \overline{c_k}h_\alpha  = M_{a^{-1}k}h^{-1}$, and hence
$$h_{0,k} = M_{a^{-1}k}h^{-1} - \overline{c_k}h_\alpha \quad \text{a.e. for all }k\in\Z.$$

Now, suppose to the contrary that there exist some $\phi\in L^2(\R)$ and some countable subset $\Gamma=\set{(\beta_{n,k},\gamma_{n,k})}\cup \set{(\beta_{\alpha},\gamma_{\alpha})}$ in $\R^2$ such that 
\begin{equation}
\label{set_equal}
h_{\alpha}\Eq T_{\beta_{\alpha}}M_{\gamma_{\alpha}}\phi\quad \text{and}\quad h_{n,k}\Eq T_{\beta_{n,k}}M_{\gamma_{n,k}}\phi\quad \text{for any }n,k\in\Z.
\end{equation}
Consequently, we have $|h_{n,k}|=|T_{\beta_{n,k}}\phi|$ for any $n,k\in\Z.$ Therefore, the diameters of the supports of $h_{n,k}$ are equal for all $n,k\in\Z$. Moreover, for any $n,k,n_1,k_1\in\Z$, one can always obtain $|h_{n,k}|$ by applying certain translation to $|h_{n_1,k_1}|$. By Equation (\ref{alternative_dual_form1}), $h_{n,k}$ is supported on $[na,(n+1)a]$ for any $(n,k)\in (\Z\setminus\set{0})\times \Z$. It follows that for any $k\in\Z$ 
\begin{equation}
\label{magnitude_equation}
|h^{-1}|\Eq |T_{-an}h_{n,k}|=|h_{0,k}|=|M_{a^{-1}k}h^{-1} - \overline{c_k}h_\alpha| \quad\text{a.e.}
\end{equation}
By taking the square, we obtain that for any $k\in \Z$
\begin{equation}
|h^{-1}|^2=|h^{-1}|^2-2\text{Re}\bigparen{e^{2\pi ia^{-1}kx}h^{-1}c_k\overline{h_\alpha}}+|c_kh_\alpha|^2. 
\end{equation}
Equivalently, 
\begin{equation}
\label{magnitude_equation2}
2\text{Re}\bigparen{e^{2\pi ia^{-1}kx}h^{-1}\overline{h_\alpha}c_k}\Eq |c_kh_\alpha|^2.
\end{equation}
Let $\theta_1(x)$ and $\theta_2(x)$ be real-valued functions such that $h^{-1}=|h^{-1}|e^{i\theta_1}$ and $\overline{h_\alpha}=|h_\alpha|e^{i\theta_2}$. We then rewrite the left-hand side of Equation (\ref{magnitude_equation2}) as 

\begin{align}
\begin{split}
2\text{Re}\bigparen{e^{2\pi ia^{-1}kx}h^{-1}\overline{h_\alpha}c_k}&\Eq 2|h^{-1}||h_\alpha|\text{Re}(e^{i(2\pi a^{-1}kx+\theta_1+\theta_2)}c_k)\\
&\Eq 2|h^{-1}||h_\alpha|\bigparen{\text{Re}(c_k)\cos(\psi_k(x))-\text{Im}(c_k)\sin(\psi_k(x))},
\end{split}
\end{align}
where for each $k\in\Z$ we define $\psi_k(x)=2\pi a^{-1}kx+\theta_1(x)+\theta_2(x).$
Since $|h_\alpha(x)|$ is nonzero almost everywhere on $[0,a]$, by Equation (\ref{magnitude_equation2}) we further obtain 
\begin{equation}
\label{magnitude_equation3}
\cos(\psi_k(x))-\frac{\text{Im}(c_k)}{\text{Re}(c_k)}\sin(\psi_k(x))=\frac{|c_k|^2}{2\text{Re}(c_k)}|h_\alpha(x)||h(x)|\quad \text{a.e. on }[0,a]. 
\end{equation}
By Lemma \ref{fundamental_lemmaI}(b), the right-hand side of Equation (\ref{magnitude_equation3}) converges to $0$ as $k$ tends to $\infty$ for almost every $x\in [0,a]$. Thus, it remains to show that the left-hand side does not converge to $0$ on a set of positive measure of $[0,a].$ By Lemma \ref{fundamental_lemmaI}(a), $\frac{\text{Im}(c_k)}{\text{Re}(c_k)}=C$ for some nonzero constant $C.$ It then follows that \begin{align}
    \begin{split}
        \cos(\psi_k(x))-\frac{\text{Im}(c_k)}{\text{Re}(c_k)}\sin(\psi_k(x))&\Eq \cos(\psi_k(x))-C\sin(\psi_k(x))\\
        &\Eq \sqrt{1+C^2}\cos\bigparen{\psi_k(x)-\eta},
    \end{split}
\end{align}
where $\eta=\tan^{-1}(C).$ By Lemma \ref{fundamental_lemmaII}(b), we see that for almost every $x\in [0,a]$ the left-hand side of Equation (\ref{magnitude_equation3}) does not converge to $0$ as $k\rightarrow \infty.$\medskip

\emph{Case 2 Pseudo-synthesis Duals}$\colon$ Let $\set{h_{n,k}}_{n,k\in\Z}\cup \set{h_{\alpha}}\subseteq L^2(\R)$ be a  pseudo-synthesis dual of $\Gc(h,\Lambda')$. That is, 
\begin{equation}
\label{alternative_dual_recon2}
    f\Eq \sum_{n,k\in\Z}\ip{f}{M_{a^{-1}k}T_{an}h}h_{n,k} + \ip{h}{M_\alpha h}h_{\alpha} \quad \text{for all }f\in L^2(\R),
\end{equation}
with the convergence of the series in the norm of $L^2(\R).$ Fix $n\in\Z\setminus\set{0}$ and let $f\in L^2(\R)$ be any function that is supported on $[na,(n+1)a]$. Then we have 
\begin{equation}
    f\Eq P_n(f) =\bigparen{\sum_{n,k\in\Z}\ip{P_nf}{M_{a^{-1}k}T_{an}h}h_{n,k} + \ip{P_nf}{M_\alpha h}h_{\alpha}}= \sum_{k\in\Z}\ip{f}{M_{a^{-1}k}T_{an}h}h_{n,k}.
\end{equation}
By the uniqueness of the pseudo-synthesis dual of $\set{M_{a^{-1}k}T_{an}h}_{n,k\in\Z}$, it follows that 
\begin{equation*}
    h_{n,k}\Eq M_{a^{-1}k}T_{an}h \Eq T_{an}M_{a^{-1}k}h^{-1} \quad \text{a.e. for any } (n,k)\in (\Z\setminus\set{0})\times \Z.
\end{equation*}
The rest of the proof follows a similar argument to that in Case $1$.
\end{proof}

Finally, we close this paper with a few remarks.
\begin{remark} (a) The generator of the Gabor frame in Theorem \ref{main_thm} is not only smooth, but also extends to an entire function of exponential type at most $2\pi a.$ (\cite{You01})\smallskip

(b) One might suspect that a potential reason why a Gabor frame fails to possess an alternative dual or a pseudo-synthesis dual that is itself a Gabor system is closely related to the ``high density" of the associated set of time-frequency shifts. A classical notion that have been frequently used to quantify the density of a set in this context is the \emph{Beurling density}. The lower and upper Beurling density associated with a countable subset $\Lambda\subseteq\R^d$ are defined by 
\begin{equation}
\label{Beruling_density}
D^{-}(\Lambda)=\liminf_{r\rightarrow\infty}\inf_{x\in \R^d}\frac{\#\bigparen{\Lambda\cap B_r(x)}}{|B_r(x)|}\quad\text{and}\quad D^{+}(\Lambda)=\limsup_{r\rightarrow\infty}\sup_{x\in \R^d}\frac{\#\bigparen{\Lambda\cap B_r(x)}}{|B_r(x)|},
\end{equation}
respectively. If $D^{-}(\Lambda)=D^{+}(\Lambda)$, then we say $\Lambda$ has a uniform Beurling density (see \cite{Hei07} for more introductions to Beurling density). A straightforward computation shows that the associated set of time-frequency shifts in Theorem \ref{main_thm} has a uniform Beurling density $1.$ On the other hand, it is known that if $\Gc(g,\Lambda)$ forms a frame for $L^2(\R^d)$, then $D^{-}(\Lambda)\geq 1$ (\cite{CDH99}).
Thus, the set of time-frequency shifts associated with the Gabor frame constructed in Theorem \ref{main_thm} already achieves the minimal possible uniform Beurling density for a Gabor frame in this sense. \smallskip

(c) The smoothness of the generator is not the critical issue that prevent a Gabor frame from admitting alternative duals or pseudo-synthesis duals that are Gabor systems. Let $\Gc(h,\Lambda')$ be the same Gabor frame specified in the proof of Theorem \ref{main_thm}. Assume that there exists some alternative dual or pseudo-synthesis dual that can be written in the form  $\set{T_\beta M_{\gamma}\phi}_{(\beta,\gamma)\in \Gamma}$ for some $\phi\in L^2(\R)$ and some subset $\Gamma\subseteq \R^2$. With slight modifications to the proof of Case 1 of Theorem \ref{main_thm}, one can show that there exists some $f\in L^2(\R)$ that is compactly supported and still satisfies Theorem \ref{main_thm} (a)--(c). 
\end{remark}

\end{document}